\newtheorem{proposition}{\textbf{Proposition}}
\newtheorem{theorem}{\textbf{Theorem}}\setcounter{theorem}{-1}
\theoremstyle{definition}
\newtheorem{definition}{\textbf{Definition}}
\providecommand{\abs}[1]{\left\lvert#1\right\rvert}
\providecommand{\abs}[1]{\left\lvert#1\right\rvert}
\def\CF{\widehat{\mathscr{P}}}
\def\One{\mathbbm{1}} 
\def\D{{\mathscr{D}}}
\def\S{{\mathscr{S}}}
\def\R{{\mathscr{R}}}
\def\drm{\mathrm{d}}
\def\C{ \mathbb{C}}
\def\Z{ \mathbb{Z}}
\def\N{ \mathbb{N}}
\def\R{ \mathbb{R}}
\def\Supp{\mathrm{Supp}} 
\begin{document}
\title{On Tempered Discrete and Lévy White Noises}

\author{Julien Fageot\footnote{Julien Fageot was supported by the Swiss National Science Foundation (SNSF) under Grant \texttt{P400P2\_194364}.}}

\date{\'Ecole polytechnique f\'ed\'erale de Lausanne}
 
\maketitle

\begin{abstract}
We study the growth properties of the family of i.i.d.  random sequences, also known as discrete white noises, and of their continuous-domain generalization, the family of Lévy white noises.
More precisely, we characterize the members of both families which are tempered---\emph{i.e.}, whose asymptotic growth is dominated by some polynomial---in terms of their moment properties.
We recover the characterization of tempered L\'evy white noises obtained by Robert Dalang and Thomas Humeau and provide a new proof of there fundamental result. 
Our approach is based on a fruitful connection between the discrete and continuous-domain white noises. 
\end{abstract}

 
\section{Introduction} \label{sec:intro}
 
    \subsection{Lévy White Noises as Generalized Random Processes}
    
 The Gaussian white noise is a random process whose sample paths are too rough to be described as a pointwise random function.
 For this reason, it has been considered as a random element in the space $\S'(\R)$ of tempered generalized functions, for instance in quantum physics~\cite{Simon1979functional}, stochastic partial differential equations~\cite{walsh1986introduction}, and white noise analysis~\cite{Hida1975,SiSi2012}. 
 
    This construction of the Gaussian white noise relies on the theory of \textit{generalized random processes},initiated independently in the 50's by Israel Moiseevich Gel'fand~\cite{Gelfand1955generalized} and Kiyoshi Itô~\cite{Ito1954distributions}. 
    This framework is the stochastic counterpart of the theory of generalized functions of Laurent Schwartz~\cite{Schwartz1966distributions} and has been extensively detailed in the monographs~\cite{GelVil4,Fernique1967processus,Ito1984foundations}; see also~\cite{Bierme2017generalized} for a recent introduction.
    The key idea is to endow generalized function spaces with random structures compatible with their topology. This allows to define random elements in the space $\S'(\R)$, but also over the larger space $\D'(\R)$ of (not necessarily tempered) generalized functions.  
    The later is used by Gel'fand and Vilenkin to introduce the complete family of Lévy white noises~\cite[Chapter 3]{GelVil4}. Those random processes share the properties of the Gaussian white noise with the exception of Gaussianity in the same way Lévy processes generalize the Brownian motion~\cite{Bertoin1998levy,Sato1994levy}.
 
    Lévy white noises are therefore constructed as random elements in the space $\D'(\R)$, while the Gaussian white noise is known to be tempered, \emph{i.e.} located in the space $\S'(\R)$. It is then natural to ask whether a given Lévy white noise is tempered or not.
 
    \subsection{The Temperedness of Lévy White Noise: Related Works}
    
    Tempered Lévy white noises have several advantages. For instance, as random elements of $\S'(\R)$, there Fourier transform is well-defined and they can therefore be used for stochastic partial differential equations SPDEs. Many works dealing with Lévy white noise therefore identify sufficient conditions so that the temperedness is achieved.
    The authors of~\cite{lee1999segal} consider Lévy white noise with finite first moment, while finite variance Lévy white noises are used for SPDEs in~\cite{Oksendal2004white,oksendal2008stochastic}. 
    
    In~\cite{Fageot2014}, Arash Amini, Michael Unser, and the author of this article have shown that a Lévy white noise is tempered as soon as it possesses a finite absolute moment of order $\epsilon>0$ arbitrarily small, what they called the \emph{$\epsilon$-condition}. 
    They conjectured that this sufficient condition is also necessary. This was further proved by Robert Dalang and Thomas Humeau in \cite{Dalang2015Levy}. More precisely, these authors proved the following result, exposed in Theorem~\ref{theo:DH}, which also reveals that Lévy white noises are almost surely not tempered when the {$\epsilon$-condition} does not occur. There result therefore fully characterize the temperedness of Lévy white noise from simple moments properties.
    
    \begin{theorem}[Dalang and Humeau, Theorem 2.5 in \cite{Dalang2015Levy}]
    \label{theo:DH}
    Let $W$ be a Lévy white noise on $\D'(\R)$. If $\mathbb{E} [|\langle W , \One_{[0,1]} \rangle|^\epsilon] < \infty$\footnote{The random variable $X_\varphi = \langle W , \varphi \rangle$ is a priori well-defined for $\varphi \in \D(\R)$, which is not the case for $\varphi = \One_{[0,1]}$. However, one can extend the definition of $X_\varphi$ to test functions that are compactly supported and bounded for any Lévy white noise, as can easily be deduced from~\cite{fageot2021domain}. This is implicit in Theorem~\ref{theo:DH}.} for some $\epsilon > 0$, then $W \in \S'(\R)$ almost surely. Otherwise, $W \notin \S'(\R)$ almost surely.  
    \end{theorem}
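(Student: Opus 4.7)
The plan is to exploit the discrete white noise naturally associated with $W$, as suggested by the abstract. Fix a nonnegative test function $\varphi \in \D(\R)$ with $\Supp \varphi \subset [0,1]$, and set $X_n = \langle W, \varphi(\cdot - n)\rangle$ for $n \in \Z$. By the stationarity of $W$ and its independence over disjoint supports (which together characterize a Lévy white noise), the sequence $(X_n)_{n \in \Z}$ is i.i.d. A moment comparison coming from the Lévy-Khintchine formula shows that $\mathbb{E}[|X_0|^\epsilon] < \infty$ if and only if $\mathbb{E}[|\langle W, \One_{[0,1]}\rangle|^\epsilon] < \infty$, so the $\epsilon$-condition of Theorem~\ref{theo:DH} becomes a moment condition on the i.i.d.\ sequence $(X_n)$.

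Next I would prove the discrete analog: for an i.i.d.\ sequence $(X_n)$, the almost-sure property ``there exists $k$ with $|X_n| = O(|n|^k)$'' is equivalent to the existence of some $\epsilon > 0$ with $\mathbb{E}[|X_0|^\epsilon] < \infty$. The ``if'' direction follows from the first Borel-Cantelli lemma applied to $\sum_n \mathbb{P}(|X_n| > |n|^k) < \infty$ when $k\epsilon > 1$. The ``only if'' direction uses the second Borel-Cantelli lemma together with independence: if every moment diverges, then $\sum_n \mathbb{P}(|X_n| > |n|^k) = \infty$ for every $k$, so $|X_n|$ exceeds any polynomial infinitely often almost surely. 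This argument also delivers the dichotomy of Theorem~\ref{theo:DH}, since temperedness is a tail event for the i.i.d.\ sequence $(X_n)$ and hence has probability $0$ or $1$ by Kolmogorov's $0$--$1$ law.

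It remains to bridge polynomial growth of $(X_n)$ with the temperedness of $W$. The easy direction is straightforward: if $W \in \S'(\R)$ almost surely, then $|\langle W, \psi\rangle|$ is controlled by some random Schwartz seminorm of $\psi$, and since any such seminorm of $\varphi(\cdot - n)$ grows only polynomially in $n$, so does $|X_n|$. The hard direction --- and the main obstacle --- is to show that polynomial growth of $(X_n)$ implies $W \in \S'(\R)$. My approach is to introduce a smooth partition of unity $(\chi_n)_{n \in \Z}$ subordinate to the cover $\{(n-1, n+2)\}_{n \in \Z}$, decompose any $\psi \in \S(\R)$ as $\psi = \sum_n \chi_n \psi$, and estimate each term $\langle W, \chi_n \psi \rangle$ using the local continuity of $W$ as an element of $\D'(\R)$ on the fixed compact $[-1, 2]$, after translating by $n$. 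This reduces matters to controlling an i.i.d.\ sequence of random local seminorms of $W$, which inherits the $\epsilon$-moment condition via the same moment comparison and therefore grows polynomially. The Schwartz decay of $\chi_n \psi$ then makes the resulting series absolutely convergent, with a final bound involving only finitely many Schwartz seminorms of $\psi$, which is precisely the definition of temperedness.
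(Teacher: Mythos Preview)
Your overall architecture matches the paper's: reduce to the i.i.d.\ sequence $X_n=\langle W,\varphi(\cdot-n)\rangle$, characterize tempered discrete white noises via moments, and close the loop. Your Borel--Cantelli argument for the discrete step is a legitimate alternative to the paper's direct product computation in Proposition~\ref{prop:zeroone}, and the moment comparison between $\langle W,\varphi\rangle$ and $\langle W,\One_{[0,1]}\rangle$ via the L\'evy measure is exactly what the paper does in the chain $7.\Rightarrow 5.\Rightarrow 6.$

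The real divergence is in the hard direction (moments $\Rightarrow$ $W\in\S'(\R)$), and here your sketch has a gap. The paper does \emph{not} argue pathwise from polynomial growth of $(X_n)$ back to temperedness of $W$; instead it passes through the characteristic exponent and the characteristic functional ($7.\Rightarrow 5.\Rightarrow 3.\Rightarrow 2.\Rightarrow 1.$), invoking the general Fernique/Bochner--Minlos criterion that continuity of $\CF_W$ on $\D(\R)$ for the $\S(\R)$-topology forces $W\in\S'(\R)$ a.s. This route never needs to control $\langle W,\chi_n\psi\rangle$ for a varying $\psi$.

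Your partition-of-unity approach, by contrast, requires bounding $|\langle W,\chi_n\psi\rangle|$ by a ``random local seminorm'' $Y_n$ of $W$ on $[n-1,n+2]$, and then asserting that $(Y_n)$ inherits the $\epsilon$-moment condition ``via the same moment comparison''. But that comparison (Sato's moment criterion, equation~\eqref{eq:satomoment} in the paper) applies to a single infinitely divisible variable $\langle W,\varphi\rangle$, not to a supremum over all test functions supported in a compact. Two issues hide here: (i) the local order of $W$ as a distribution is a priori random, so it is not clear which seminorm $Y_n$ even refers to; (ii) even granting a deterministic order (which one can get by realizing $W$ as the derivative of a c\`adl\`ag L\'evy process, so order $\leq 1$ locally), you still need a maximal inequality of the type $\mathbb{E}\big[\sup_{t\in[0,1]}|L(t)|^\epsilon\big]<\infty$ from $\mathbb{E}[|L(1)|^\epsilon]<\infty$. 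Such results exist, but they are genuine additional input, not the ``same moment comparison'' you invoked. The paper's detour through $\CF_W$ avoids this entirely.
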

    
    Understanding the sample path properties of random processes is a natural question. The growth properties of Lévy processes has been considered in~\cite{Pruitt1981growth}; see~\cite[Section 48]{Sato1994levy} for extensive discussions and additional references on this matter. The control of the growth of Lévy processes has also been considered by authors studying the regularity of Lévy processes and Lévy white noises, measured in terms of weighted Besov spaces\cite{Schilling1998growth,aziznejad2020waveletbesov,Fageot2017multidimensional}. Those weighted Besov spaces being subspaces of $\S'(\R)$, the temperedness of the considered random processes is established. However, these works do not prove the necessity of the $\epsilon$-condition.  
    

     \subsection{Contributions and Outline}
     
     Our goal in this paper is to revisit the characterization of tempered Lévy white noise of Dalang and Humeau. In particular, we provide a new proof of Theorem~\ref{theo:DH}. The exposition is self-contained and highlights the connection between Lévy white noises and families of i.i.d. random sequences (\emph{i.e.}, discrete white noises). The paper is organized as follows.
     
     \begin{itemize}
         \item We first focus on discrete white noises in Section~\ref{sec:discrete}. Our main result is exposed in Theorem~\ref{theo:discrete}: We characterize the weighted $\ell_\infty$-sequence spaces on which an i.i.d. random sequence belongs in terms of its moment properties. In particular, we demonstrate that the $\epsilon$-condition is equivalent to the temperedness of \textit{discrete} white noises. Even if this result is not surprising, it has never been observed to the best of our knowledge.
         
         \item We then consider the case of (continuous-domain) Lévy white noises in Section~\ref{sec:levynoise}. Our main result is Theorem~\ref{theo:lévynoisetempered}, where the temperedness of Lévy white noise is characterized from different perspectives. 
         It covers Theorem~\ref{theo:DH}, together with known characterizations of the temperedness in terms characteristic functional, characteristic exponent, and the Lévy measure of the underlying white noise. 
         The novelty of our result is the connection between a Lévy white noise and the random sequences $(\langle W , \varphi(\cdot- n)\rangle)_{n\in \Z}$, and results in a new proof of Theorem~\ref{theo:DH}. 
     \end{itemize}

\section{Discrete White Noises} \label{sec:discrete}

    \subsection{Sequence Spaces} \label{sec:sequencespaces}

    We denote by $\D'(\Z)$ the space of all real sequences. 
    This notation is somehow unusual and is made in analogy with the space $\D'(\R)$ of generalized functions (see Section~\ref{sec:functionspaces}).
    The space $\D'(\Z)$ is the  dual of the space $\D(\Z)$ of sequences that have finitely many nonzero elements. Any sequence $u\in \D'(\Z)$ indeed defines a linear form over the space $\D(\Z)$ via the relation
    \begin{equation} \label{eq:dualprod}
        \langle u ,v \rangle = \sum_{n\in \Z} u[n] v[n]
    \end{equation} 
    for any $v \in \D(\Z)$. Note that the sum in \eqref{eq:dualprod} is actually finite. 
    One can show that $\D'(\Z)$ is the \textit{topological} dual of $\D(\Z)$ if we endow the later with the locally convex topology corresponding to the following notion of convergence. The sequences $v_k = (v_k[n])_{n\in \Z} \in \D(\Z)$, $k \geq 1$, converge to $0$ if they have a common support $K \subset \Z$ outside of which they vanish and if the finite-dimensional vectors $(v_k[n])_{n\in K}$ converge to $0$. \\
    
   The  space $\ell_\infty(\Z)$ of bounded sequences $u = (u[n])_{n\in \Z} \in \D'(\Z)$ such that $\lVert u \rVert_\infty = \sup_{n\in\Z} |u[n]| < \infty$ is a Banach space for the norm $\lVert \cdot \rVert_\infty$. 
    More generally, for $\alpha \in \R$, let $\ell_{\infty,\alpha}(\Z)$ be the weighted Banach space for the norm 
    \begin{equation}
        u \mapsto \lVert u \rVert_{\infty,\alpha} = \sup_{n\in\Z} |n|^\alpha |u[n]| < \infty.
    \end{equation}  
    When $\alpha _1 \geq \alpha_2$, we have the topological embedding $\ell_{\infty,\alpha_1}(\Z) \subseteq \ell_{\infty,\alpha_2}(\Z)$. \\

    The space of rapidly decaying sequence is denoted by $\S(\Z)$. It corresponds to the sequences $u = (u[n])_{n\in \Z}$ such that $|n|^\alpha |u[n]| \rightarrow_{|n|\rightarrow \infty} 0$ for any $\alpha \geq 0$. It is a Fr\'echet space for the family of norms $\lVert \cdot \rVert_{\infty,\alpha}$ with $\alpha \geq 0$ and we have the projective limit
    \begin{equation}
        \S(\Z) = \bigcap_{\alpha \geq 0} \ell_{\infty,\alpha}(\Z). 
    \end{equation}
    The topological dual of $\S(\Z)$ is the space $\S'(\Z)$ of tempered sequences, \emph{i.e.}, the space of sequences which are bounded by a polynomial. It is the inductive limit 
    \begin{equation}
        \S'(\Z) = \bigcup_{\alpha \geq 0} \ell_{\infty,- \alpha}(\Z). 
    \end{equation}
    The following simple characterization of tempered sequences will be useful in Section~\ref{sec:lévy}. 
    
    \begin{proposition} \label{prop:caracSpinDp}
    Let $u \in \D'(\Z)$ seen as a linear functional over $\D(\Z)$ according to \eqref{eq:dualprod}. Then, $u \in \S'(\Z)$ if and only if $u : \D(\Z) \rightarrow \R$ is continuous for the topology induced by $\S(\Z)$. This is equivalent to the fact that 
    \begin{equation}
        \langle u , v_k \rangle \underset{k \rightarrow \infty}{\longrightarrow} 0
    \end{equation}
    for any sequences $v_k \in \D(\Z)$, $k\geq 1$, such that $\lVert v_k \rVert_{\infty,\alpha} \rightarrow 0$ when $k\rightarrow \infty$ for any $\alpha \geq 0$. 
    \end{proposition}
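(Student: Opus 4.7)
The plan is to establish the two equivalences as a cyclic chain $u \in \S'(\Z) \Rightarrow u$ is continuous on $\D(\Z)$ for the $\S(\Z)$-topology $\Rightarrow$ the stated sequential condition $\Rightarrow u \in \S'(\Z)$. All three steps are essentially standard, and the main subtlety is making sure that the quasi-norm $\|\cdot\|_{\infty,\alpha}$ (which does not see the value at $n=0$ when $\alpha>0$) is handled by always adjoining $\alpha=0$, which is legitimate since $\|\cdot\|_{\infty,0}=\|\cdot\|_\infty$ is one of the defining seminorms of $\S(\Z)$.

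For $u \in \S'(\Z) \Rightarrow$ continuity, I would pick $\alpha_0 \geq 0$ such that $u \in \ell_{\infty,-\alpha_0}(\Z)$, so $|u[n]| \leq C(1+|n|)^{\alpha_0}$ for all $n \in \Z$, and then bound
\begin{equation*}
|\langle u,v\rangle| \leq \sum_{n\in\Z} |u[n]|\,|v[n]| \leq C|v[0]| + C \sum_{n\neq 0} (1+|n|)^{\alpha_0}|v[n]|,
\end{equation*}
splitting the sum using $|v[n]| \leq \|v\|_{\infty,\alpha_0+2}\,|n|^{-(\alpha_0+2)}$ for $n\neq 0$, which gives a convergent series. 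Hence $|\langle u,v\rangle| \leq C'\bigl(\|v\|_{\infty,0} + \|v\|_{\infty,\alpha_0+2}\bigr)$, proving continuity of $u:\D(\Z)\to\R$ for the topology induced by $\S(\Z)$.

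The equivalence between continuity and the sequential criterion follows because $\S(\Z)$ is a Fréchet space for the countable family of seminorms $(\|\cdot\|_{\infty,\alpha})_{\alpha \geq 0}$ (indeed, the family indexed by $\alpha \in \N$ suffices). A linear functional on a metrizable locally convex space is continuous if and only if it is sequentially continuous at $0$, and $v_k \to 0$ in $\S(\Z)$ means exactly that $\|v_k\|_{\infty,\alpha} \to 0$ for every $\alpha \geq 0$. This takes care of the second equivalence.

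Finally, to show that the sequential condition (equivalently, the continuity) forces $u \in \S'(\Z)$, I would test against the Kronecker sequences $\delta_n \in \D(\Z)$ defined by $\delta_n[m] = \One_{m=n}$. By continuity there exist $\alpha_0 \geq 0$ and $C > 0$ with $|\langle u,v\rangle| \leq C\bigl(\|v\|_{\infty,0} + \|v\|_{\infty,\alpha_0}\bigr)$ for all $v \in \D(\Z)$. Evaluating at $v = \delta_n$ gives $|u[n]| \leq C(1+|n|^{\alpha_0})$, so $u \in \ell_{\infty,-\alpha_0}(\Z) \subset \S'(\Z)$. The only mild point one needs to be careful with is that $\|\delta_0\|_{\infty,\alpha_0}=0$ when $\alpha_0 > 0$, which is why keeping the $\|\cdot\|_\infty$ term in the continuity estimate is essential; this is the only place where any real attention is required, and no deeper obstacle is expected.
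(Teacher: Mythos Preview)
Your argument is correct. The cyclic chain is sound: the explicit seminorm bound in the first step is valid, the sequential characterization follows from metrizability of the induced topology, and testing against the Kronecker sequences $\delta_n$ indeed pins down $u \in \ell_{\infty,-\alpha_0}(\Z)$. Your care about the $n=0$ term is warranted given the paper's weight $|n|^\alpha$, and adjoining $\|\cdot\|_{\infty,0}$ resolves it cleanly.

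Your route differs from the paper's. The paper argues the converse direction abstractly: since $\D(\Z)$ is dense in $\S(\Z)$, a functional on $\D(\Z)$ that is continuous for the induced $\S(\Z)$-topology extends uniquely to an element of $\S'(\Z)$, and this extension must coincide with the original sequence $u$. You instead extract from continuity a concrete bound $|\langle u,v\rangle| \leq C(\|v\|_{\infty,0}+\|v\|_{\infty,\alpha_0})$ and evaluate at $\delta_n$ to read off $|u[n]|\leq C(1+|n|^{\alpha_0})$ directly. The paper's approach is shorter and transfers verbatim to any dense-subspace situation; yours is more elementary, avoids invoking density of $\D(\Z)$ in $\S(\Z)$, and yields the quantitative statement that the same $\alpha_0$ governing the continuity bound already places $u$ in $\ell_{\infty,-\alpha_0}(\Z)$.
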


    \begin{proof}
        By definition, $u \in \S'(\Z)$ defines a continuous and linear form over $\S(\Z)$ as in \eqref{eq:dualprod} and its restriction over $\D(\Z)$ for the topology induced by $\S(\Z)$ is clearly continuous. Conversely, any $u \in \D'(\Z)$ such that $v \in \D(\Z) \mapsto \langle u , v \rangle$ is continuous for the topology of $\S(\Z)$ and can be uniquely extended as a linear and continuous form over $\S(\Z)$ (using that $\D(\Z)$ is dense in $\S(\Z)$) and the extension is therefore in $\S'(\Z)$ by definition of the dual. This extension corresponds to the sequence $u$ itself, which is therefore tempered.
    \end{proof}

    \subsection{Random Sequences and Discrete White Noises} \label{sec:sequencespaces}
    Throughout this paper, we fix a probability space $(\Omega, \mathcal{F}, \mathscr{P})$.
    A random sequence is a random element in the space $\D'(\Z)$. More precisely,  it is a mapping $X : \Z \times \Omega \rightarrow \R$ such that $X(n,\omega) = X_n(\omega) \in \R$ for any $\omega \in \Omega$ and $n\in \Z$. Then, for any fixed $n\in \Z$, $X_n$ is  a real random variable. For any $\omega \in \Omega$, we have that $X(\omega) = (X_n(\omega))_{n\in \Z} \in \D'(\Z)$. 
    The law of $X$ is characterized by the quantities
    \begin{equation}
        \mathscr{P}_X( \{ u \in \D'(\Z) | \langle u ,v \rangle \leq a \} ) = \mathscr{P} ( \langle X, v \rangle \leq a)
    \end{equation}
    for any $v \in \D(\Z)$ and $a \in \R$. Thereafter, it will be important to characterize the random sequences $X$ that are \textit{tempered}, meaning that $X \in \S'(\Z)$ almost surely. 
    
    \begin{definition}
    A \emph{discrete white noise} is a random sequence $X = (X_n)_{n\in \Z}$ of independent and identically distributed (i.i.d.) random variables $X_n$.
    \end{definition}
    
    The law of a discrete white noise $X$ is fully characterized by the law of $X_0$. For any sequence $(a_n)_{n\in \Z}$, we indeed have that
    \begin{equation}
        \mathscr{P}( \forall n \in \Z, \ X_n \leq a_n ) = \prod_{n\in \Z} \mathscr{P}( X_n \leq a_n) = \prod_{n\in \Z} \mathscr{P}( X_0 \leq a_n),
    \end{equation}
    where the infinite products are converging in $[0,1]$ (as any infinite product of terms in $[0,1]$). 
    A discrete white noise is \emph{a priori} in $\D'(\Z)$. In the next section, we provide characterizations of discrete white noises to be localized in the sequence spaces $\ell_{\infty,\alpha}(\Z)$ and $\S'(\Z)$. 

    \subsection{Characterization of Tempered Discrete White Noises} \label{sec:sequencespaces}
    
    The goal of this section is to study the growth properties of discrete white noises. More precisely, we shall prove the following result.
    
    \begin{theorem}
    \label{theo:discrete}
    Let $X = (X_n)_{n\in \Z}$ be a discrete white noise. Then, we have the equivalence
    \begin{equation}
        X \in \ell_{\infty,-1/p} (\Z) \text{ a.s. } \quad \Longleftrightarrow  \quad  \mathbb{E} [|X_0|^p] < \infty. 
    \end{equation}
    It this condition is not fulfilled, then  $X \notin \ell_{\infty,-1/p} (\Z)$ a.s. We also have the equivalence
    \begin{equation}
        X \in \S'(\Z) \text{ a.s. } \quad \Longleftrightarrow  \quad  \exists \ \epsilon > 0, \  \mathbb{E} [|X_0|^\epsilon] < \infty. 
    \end{equation}
    If this condition is not fulfilled, then  $X \notin \S'(\Z)$ a.s.
    \end{theorem}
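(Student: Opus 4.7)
The plan is to reduce both equivalences to the tail-sum characterization of moments combined with the two Borel--Cantelli lemmas, the second of which applies because of independence. The central observation is that for every $\alpha \in \R$, the event $\{X \in \ell_{\infty,\alpha}(\Z)\}$ is a tail event with respect to the filtration generated by the i.i.d.\ sequence $(X_n)_{n \in \Z}$, since altering finitely many coordinates does not change whether $\sup_{|n|\geq N} |n|^{\alpha} |X_n|$ is finite. Hence, by Kolmogorov's zero-one law, such events have probability either $0$ or $1$, which already reduces the problem to producing the right dichotomy.

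For the first equivalence, I would first recall the standard moment-tail identity $\mathbb{E}[|X_0|^p] < \infty \Longleftrightarrow \sum_{n \geq 1} \mathbb{P}(|X_0| \geq c\, n^{1/p}) < \infty$ (for any, and hence for every, $c > 0$), which follows by comparing the sum with the integral $\int_0^\infty \mathbb{P}(|X_0|^p \geq t)\,\drm t$. Then I would apply the two Borel--Cantelli lemmas to the events $A_n(c) = \{|X_n| \geq c |n|^{1/p}\}$ for $n \neq 0$, which are independent since the $X_n$ are. If $\mathbb{E}[|X_0|^p] < \infty$, the tail-sum is finite for every $c > 0$, so by the first Borel--Cantelli lemma, almost surely only finitely many $A_n(c)$ occur, giving $|X_n| \leq c |n|^{1/p}$ for all but finitely many $n$, and thus $X \in \ell_{\infty,-1/p}(\Z)$. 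If $\mathbb{E}[|X_0|^p] = \infty$, the tail-sum diverges for every $c > 0$, so by the second Borel--Cantelli lemma, almost surely $|X_n| \geq c |n|^{1/p}$ for infinitely many $n$; taking $c \in \N$ and intersecting over $c$, we conclude $\sup_{n} |n|^{-1/p}|X_n| = \infty$ a.s., i.e.\ $X \notin \ell_{\infty,-1/p}(\Z)$ a.s.

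For the second equivalence, I would exploit the inductive-limit description $\S'(\Z) = \bigcup_{\alpha \geq 0} \ell_{\infty,-\alpha}(\Z)$ recalled in Section~\ref{sec:sequencespaces}. If $\mathbb{E}[|X_0|^\epsilon] < \infty$ for some $\epsilon > 0$, then by the first part applied with $p = \epsilon$, we have $X \in \ell_{\infty,-1/\epsilon}(\Z) \subseteq \S'(\Z)$ almost surely. Conversely, if no finite absolute moment exists, then for each integer $k \geq 1$, we have $\mathbb{E}[|X_0|^{1/k}] = \infty$, and the first part yields $X \notin \ell_{\infty,-k}(\Z)$ almost surely. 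Since this is a countable family of null events whose union exhausts $\S'(\Z)$ (any tempered sequence is dominated by some integer power), the countable union of nullsets is null and $X \notin \S'(\Z)$ almost surely.

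The main technical hurdle is really just the use of independence in the divergence direction, because otherwise the conclusion could fail: without the second Borel--Cantelli lemma, an infinite tail-sum would only give $\mathbb{P}(A_n(c) \text{ i.o.}) \in [0,1]$, whereas we need probability one. Apart from that, everything reduces to the moment--tail dictionary and a careful handling of the endpoint $n=0$, which is absorbed in a single term and does not affect asymptotic growth.
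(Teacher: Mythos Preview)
Your proof is correct and takes a genuinely different route from the paper's. The paper does not invoke Borel--Cantelli or Kolmogorov's zero--one law at all; instead, it computes the probability $\mathscr{P}(X \in \ell_{\infty,-\alpha}(\Z))$ directly. Writing $\ell_{\infty,-\alpha}(\Z) = \bigcup_{C>0}\{u : |u_n|\,|n|^{-\alpha} \leq C\}$ as an increasing union and using independence, the paper obtains
\[
\mathscr{P}(X \in \ell_{\infty,-\alpha}(\Z)) = \lim_{C\to\infty} \prod_{n\in\Z}\bigl(1 - F(C|n|^\alpha)\bigr), \qquad F(x) = \mathscr{P}(|X_0|\geq x),
\]
and then analyzes this infinite product via the elementary equivalence $\sum u_n < \infty \Leftrightarrow \sum -\log(1-u_n) < \infty$, showing the limit is $0$ or $1$ according to whether $\sum_n F(|n|^\alpha)$ diverges or converges. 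The connection to moments is then made exactly as you do, through the integral $\int_0^\infty F(x^{1/p})\,\drm x = \mathbb{E}[|X_0|^p]$. So both arguments reduce to the same tail-sum criterion; the difference is that the paper extracts the $0$--$1$ dichotomy from an explicit product computation, while you get it from the two Borel--Cantelli lemmas. Your route is shorter and more in line with standard probability textbooks; the paper's route is more self-contained (it avoids citing Borel--Cantelli) and makes the dependence on $C$ visible. One minor remark: your opening invocation of Kolmogorov's zero--one law is not actually needed, since your Borel--Cantelli argument already delivers probability exactly $0$ or $1$ in each case without appealing to the tail $\sigma$-field.
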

    
    Our proof is based on Proposition~\ref{prop:zeroone}, which establishes $(0-1)$ laws for discrete white noises.
    
    \begin{proposition} \label{prop:zeroone} 
    For any discrete white noise $X$ and any $\alpha > 0$, we have that
    \begin{equation}
        \mathscr{P} ( X \in \ell_{\infty,-\alpha}(\Z) ) \in \{0, 1 \}
    \end{equation}
    and $\mathscr{P} ( X \in \ell_{\infty,-\alpha}(\Z) ) = 1$ if and only if 
    \begin{equation} \label{eq:conditionX0}
        \sum_{n \geq 0} \mathscr{P}( |X_0| \geq n^{\alpha} ) < \infty.
    \end{equation}
    Moreover, $ \mathscr{P} ( X \in \S'(\Z) ) \in \{0, 1 \}$, the value being $1$ if and only if there exists $\alpha \geq 0$ such that \eqref{eq:conditionX0} holds. 
    \end{proposition}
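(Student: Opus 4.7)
The plan is to split Proposition~\ref{prop:zeroone} into two independent pieces: a 0-1 law coming from Kolmogorov's theorem, and a quantitative characterization coming from the two Borel-Cantelli lemmas applied to the independent events $A_n = \{|X_n| \geq n^\alpha\}$.

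For the 0-1 law, I would first observe that, for any fixed $\alpha > 0$, the event $\{X \in \ell_{\infty,-\alpha}(\Z)\}$ is a tail event of the independent family $(X_n)_{n\in\Z}$: whether $\sup_{n\in\Z} |n|^{-\alpha}|X_n|$ is finite is unaffected by modifying any finite collection of the $X_n$'s. Kolmogorov's 0-1 law therefore gives $\mathscr{P}(X \in \ell_{\infty,-\alpha}(\Z)) \in \{0,1\}$, and exactly the same reasoning applies to $\{X \in \S'(\Z)\}$.

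For the quantitative characterization of $\ell_{\infty,-\alpha}(\Z)$ membership, I would apply Borel-Cantelli to $A_n = \{|X_n| \geq n^\alpha\}$, which are independent with common probability $\mathscr{P}(|X_0| \geq n^\alpha)$. If \eqref{eq:conditionX0} holds, the first Borel-Cantelli lemma gives that a.s.\ only finitely many $A_n$ occur, hence $|X_n| \leq n^\alpha$ for all large $|n|$, so $X \in \ell_{\infty,-\alpha}(\Z)$ a.s. The converse is more delicate: divergence of $\sum_n \mathscr{P}(|X_0| \geq n^\alpha)$ combined with the second Borel-Cantelli lemma only yields $|X_n| \geq n^\alpha$ infinitely often, which on its face is compatible with membership in $\ell_{\infty,-\alpha}(\Z)$. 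To force the supremum to be infinite, I would first upgrade the divergence to $\sum_n \mathscr{P}(|X_0| \geq C n^\alpha) = \infty$ for every $C > 0$; since $x \mapsto \mathscr{P}(|X_0| \geq x^\alpha)$ is non-increasing, this is a routine integral comparison followed by a change of variables. Applying the second Borel-Cantelli to $\{|X_n| \geq C n^\alpha\}$ for each $C \in \N$ then gives $\sup_n |n|^{-\alpha}|X_n| \geq C$ almost surely for every $C$, whence $\sup_n |n|^{-\alpha}|X_n| = \infty$ a.s.\ and $X \notin \ell_{\infty,-\alpha}(\Z)$.

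For the $\S'(\Z)$ statement, I would use that $\S'(\Z) = \bigcup_{k\in\N} \ell_{\infty,-k}(\Z)$ is a \emph{countable} increasing union. Combined with the 0-1 laws for each $\ell_{\infty,-k}(\Z)$ and the previous step, this gives $\mathscr{P}(X \in \S'(\Z)) = 1$ if and only if there exists $k \in \N$ such that \eqref{eq:conditionX0} holds with $\alpha = k$, which is the stated characterization. The main obstacle in the whole argument is the $C$-uniform divergence step: without it, Borel-Cantelli alone does not prevent $X$ from sitting in $\ell_{\infty,-\alpha}(\Z)$ when the moment series diverges.
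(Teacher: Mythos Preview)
Your proof is correct and follows a genuinely different route from the paper's own argument. The paper does \emph{not} invoke Kolmogorov's 0--1 law or the Borel--Cantelli lemmas at all. Instead it computes the probability directly: writing $F(x)=\mathscr{P}(|X_0|\geq x)$ and $\ell_{\infty,-\alpha}(\Z)=\bigcup_{C>0}\{\,|u_n|\leq C|n|^\alpha\ \forall n\,\}$, independence and the i.i.d.\ hypothesis give
\[
\mathscr{P}(X\in\ell_{\infty,-\alpha}(\Z))=\lim_{C\to\infty}\prod_{n\in\Z}\bigl(1-F(C|n|^\alpha)\bigr)
=\lim_{C\to\infty}\exp\Bigl(\sum_{n\in\Z}\log\bigl(1-F(C|n|^\alpha)\bigr)\Bigr),
\]
and the paper then argues via the comparison $-\log(1-u_n)\sim u_n$ and dominated convergence that this limit is $0$ if $\sum_n F(C|n|^\alpha)=\infty$ for every $C$, and $1$ otherwise. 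The 0--1 law and the characterization thus emerge simultaneously from a single computation.

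Your approach separates the two ingredients cleanly: Kolmogorov handles the 0--1 law with no work, and the two Borel--Cantelli lemmas handle the characterization. This is shorter and more conceptual; the only real analytic content left is the ``$C$-uniform divergence'' step, which you correctly identify and handle by the same integral-comparison and change-of-variables trick the paper uses. The paper's route has the minor advantage of being fully self-contained (no named theorems), while yours makes clearer \emph{why} the result is true and would generalize more readily. Both arguments are equally rigorous; the student version is the one most probabilists would write down first.
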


    \textit{Remark.} The characterization \eqref{eq:conditionX0} in Proposition~\ref{prop:zeroone} is false for $\alpha = 0$. 
    In this case, we have that $\mathscr{P}(X \in \ell_{\infty}(\Z) ) = 1$ if and only if the common law $\mathscr{P}_{X_0}$ of the $X_n$ has a finite support. 
 
    \begin{proof}[Proof of Proposition~\ref{prop:zeroone}]
    For $x \geq 0$, we set $F(x) = \mathscr{P}(|X_0| \geq x)$. The set $\ell_{\infty,-\alpha}(\Z)$ satisfies
    \begin{equation}
        \ell_{\infty,-\alpha}(\Z) = \bigcup_{C > 0} \left\{ u = (u_n)_{n\in \Z} \in \D'(\Z), \ |u_n||n|^\alpha \leq C \right\},
    \end{equation}
    where the union over $C > 0$ is increasing when $C$ increases. Hence, we deduce that 
    \begin{align} \label{eq:Ineedthat}
        \mathscr{P} ( X \in  \ell_{\infty,-\alpha}(\Z) )
        &=
        \lim_{C\rightarrow \infty} \mathscr{P}( \forall n \in \Z, \ |X_n||n|^{-\alpha} \leq C )
        =
        \lim_{C\rightarrow \infty} \prod_{n \in \Z} \mathscr{P}( |X_0||n|^{-\alpha} \leq C ) \nonumber \\
       & = 
        \lim_{C\rightarrow \infty} \prod_{n \in \Z} \left(1 - F(C|n|^\alpha) \right)
        = 
        \lim_{C\rightarrow \infty} \exp\left( \sum_{n \in \Z} \log\left(1 - F(C|n|^\alpha) \right) \right),
    \end{align}
    where the infinite product converges in $[0,1]$ and the infinite sum in $[-\infty, 0]$. 
    
    If $(u_n)_{n\in \Z}$ is a sequence such that $0 \leq u_n < 1$ for any $n\in \Z$, and $u_n \rightarrow 0$ when $|n|\rightarrow \infty$, then $- \log (1 - u_n) \geq 0$ for every $n \in \Z$ and
    \begin{equation} \label{eq:usefulcomparaison}
        \sum_{n\in \Z}  u_n < \infty \quad  \Longleftrightarrow   \quad  \sum_{n\in \Z}  - \log (1 - u_n) < \infty.
    \end{equation}
    This follows from the fact that $- \log ( 1 - u_n) = \log ( 1 + \frac{u_n}{1-u_n}) \sim \frac{u_n}{1-u_n} \sim u_n$ when $|n|\rightarrow \infty$.
    We apply this relation to $u_n = F(C|n|^\alpha)$ and  distinguish two cases. \\
    
    {Assume first that  $\sum_{n\in \Z} F(C|n|^\alpha) = \infty$ for any $C > 0$.} Then, \eqref{eq:usefulcomparaison} implies that 
    \begin{equation}
        \sum_{n \in \Z} \log\left(1 - F(C|n|^\alpha) \right) = - \infty
    \end{equation}
    for any $C >0$ and therefore, due to \eqref{eq:Ineedthat}, $\mathscr{P} ( X \in  \ell_{\infty,-\alpha}(\Z) ) = \exp( - \infty ) = 0$. \\
    
    {Assume now that there exists $C_0 > 0$ such that $\sum_{n\in \Z} F(C_0|n|^\alpha) = \infty$.}  
    Then, we have that
    \begin{equation}
        - \log (1 - F(C|n|^\alpha)) \underset{C\rightarrow \infty}{\longrightarrow}  - \log (1 - F(\infty)) = - \log (1-0) = 0. 
    \end{equation}
    Moreover, for $C \geq C_0$, using that $F$ is decreasing, 
    \begin{equation}
    - \log (1 - F(C|n|^\alpha)) \leq - \log (1 - F(C_0|n|^\alpha)),
    \end{equation}
    the right term being summable (using \eqref{eq:usefulcomparaison}) 
    and independent from $C \geq C_0$. We can therefore apply the Lebesgue dominated convergence theorem to deduce that
    \begin{equation}
        \sum_{n\in \Z}    - \log (1 - F(C|n|^\alpha)) \underset{C\rightarrow \infty}{\longrightarrow} 0.
    \end{equation}
    Finally, this implies with \eqref{eq:Ineedthat} that  $\mathscr{P} ( X \in  \ell_{\infty,-\alpha}(\Z) ) = \exp(0) = 1$. \\
    
    We have shown that  $\mathscr{P} ( X \in  \ell_{\infty,-\alpha}(\Z) )$ can only take the value $0$ and $1$. Moreover, the value $1$ is taken if and only if there exists $C_0 > 0$ such that $\sum_{n\in \Z} F(C_0|n|^\alpha) = \infty$ holds. However, this condition is then true for any $C_0$. To see this, we simply observe that 
    \begin{equation} \label{eq:similarsumintegral}
        \sum_{n\in \Z} F(C_0|n|^\alpha) < \infty 
        \quad \Longleftrightarrow \quad
        \int_{\R} F( C_0 |x|^\alpha ) \mathrm{d} x < \infty. 
    \end{equation}
    Indeed, $F$ is decreasing and therefore, for any $0 \leq n \leq x \leq n+1$, $F(C_0 (n+1)^\alpha) \leq F(C_0 x^\alpha) \leq F(C_0 n^\alpha)$ (with similar relations for $x \leq 0$), which easily implies \eqref{eq:similarsumintegral}. Moreover, a change of variable $x \leftarrow C_0^{1/\alpha} x$ gives that
    $\int_{\R} F( C_0 |x|^\alpha ) \mathrm{d} x= C_0^{-1 / \alpha} \int_{\R} F( |x|^\alpha ) \mathrm{d} x$, hence the finiteness does not depend on $C_0 > 0$. Finally, \eqref{eq:conditionX0} follows by choosing $C_0=1$. \\
    
    For $\S'(\Z)$, we remark that the spaces $\ell_{\infty,-\alpha}(\Z)$ are growning when $\alpha \geq 0$ increases, hence
    \begin{equation}
        \S'(\Z) = \bigcup_{\alpha \geq 0} \ell_{\infty,-\alpha} (\Z) = \lim_{\alpha\rightarrow \infty} \ell_{\infty,-\alpha}(\Z).
    \end{equation}
    This implies that
    \begin{equation}\label{eq:usefulagain}
        \mathscr{P} ( X \in \S'(\Z) )= \lim_{\alpha \rightarrow \infty} \mathscr{P} ( X \in \ell_{\infty,-\alpha} (\Z) ) . 
    \end{equation}
    Assume that there exists $\alpha_0 > 0$ such that \eqref{eq:conditionX0} holds. Then, $ \mathscr{P} ( X \in  \ell_{\infty,-\alpha_0}(\Z) ) = 1$ 
    and since $\ell_{\infty,-\alpha_0} (\Z) \subset \ell_{\infty,-\alpha} (\Z)$, 
    we also have that $\mathscr{P} ( X \in  \ell_{\infty,-\alpha}(\Z) ) = 1$ for any $\alpha \geq \alpha_0$, 
    hence $\mathscr{P} ( X \in \S'(\Z) = 1$ due to \eqref{eq:usefulagain}. 
    On the contrary, assume that $\sum_{n \geq 0} \mathscr{P}( |X_0| \geq n^{\alpha} ) = \infty$ for any $\alpha$. 
    This implies that $\mathscr{P} ( X \in \ell_{\infty,-\alpha} (\Z) ) = 0$ for every $\alpha \geq 0$ and therefore $\mathscr{P} ( X \in \S'(\Z) ) = 0$, again due to \eqref{eq:usefulagain}.
    \end{proof}
    
    \begin{proof}[Proof of Theorem~\ref{theo:discrete}] 
    We first observe that the $(0-1)$ law in Proposition \ref{prop:zeroone} implies that $X \notin \ell_{\infty, -1/p}(\Z)$ a.s. as soon as the condition $X \in \ell_{\infty, -1/p}(\Z)$ is not satisfied (the same holds for the sequence space $\S'(\Z)$). This covers the last part of Theorem~\ref{theo:discrete}. 
    
    Using the same argument as in the proof of Proposition~\ref{prop:zeroone}, we moreover have that 
    \begin{equation} \label{eq:1stuff}
           \sum_{n \geq 0} \mathscr{P}( |X_0| \geq n^{1/p} ) = \sum_{n \in \Z} F( n^{1/p}) < \infty 
           \quad \Longleftrightarrow \quad
           \int_{0}^{\infty} F( x^{1/p} ) \drm x < \infty.
    \end{equation}
    Denoting by $\mathscr{P}_{|X_0|}$ the probability law of $|X_0|$, we also remark that
    \begin{align} \label{eq:2stuff}
         \int_{0}^{\infty} F( x^{1/p} ) \drm x  &= 
         \int_0^\infty \int_0^\infty \One_{t \geq x^{1/p}}
 \mathscr{P}_{|X_0|}(\drm t) \drm x  \
        =
        \int_0^\infty\left(  \int_0^\infty \One_{t^p \geq x} \drm x \right) \mathscr{P}_{|X_0|}(\drm t) \nonumber \\
        &=
        \int_0^\infty t^p \mathscr{P}_{|X_0|}(\drm t) = \mathbb{E} [|X_0|^p].
    \end{align}
    Finally, this shows that $\sum_{n \geq 0} \mathscr{P}( |X_0| \geq n^{1/p} )  < \infty$ if and only if $\mathbb{E} [|X_0|^p] < \infty$ and Theorem~\ref{theo:discrete} follows from the criterion of Proposition~\ref{prop:zeroone}.
    \end{proof}
        
    We illustrate Theorem~\ref{theo:discrete} with some examples. Thereafter, $X= (X_n)_{n\in \Z}$ is a discrete white noise. 
    \begin{itemize}
        \item Assume that the random variables $X_n$, $n\in \Z$, are symmetric-$\alpha$-stable (S$\alpha$S)~\cite{Taqqu1994stable}. This means that the common characteristic function of the $X_n$ is $\CF_{X_n}(\xi) = \exp ( - \gamma^\alpha |\xi|^\alpha)$ where $\gamma > 0$ and $0 < \alpha \leq 2$. The case $\alpha = 2$ corresponds to the Gaussian law, for which all moments are finite. Hence, $X \in \ell_{\infty,-\epsilon}(\Z)$ for any $\epsilon > 0$ in this case. 
        
        Non-Gaussian S$\alpha$S random variables have an infinite variance: For $\alpha < 2$, we actually have that $\mathbb{E} [|X_0|^p] < \infty$ if and only if $p<\alpha$~\cite[Property 1.2.16]{Taqqu1994stable}. Hence, Theorem~\ref{theo:discrete} implies that, for $0< \alpha < 2$,
        \begin{equation}
            X \in \ell_{\infty, - \frac{1}{\alpha} - \epsilon}(\Z)
        \end{equation}
        almost surely for any $\epsilon > 0$. In particular, $X \in \S'(\Z)$ almost surely. Moreover, the fact that  $\mathbb{E} [|X_0|^\alpha] = \infty$ implies that $ X \notin \ell_{\infty, - \frac{1}{\alpha}}(\Z)$ almost surely. 
        
        \item As for the Gaussian case, random variables whose moments are all finite are such that $X \in \ell_{\infty, -\epsilon}(\Z)$ almost surely for any $\epsilon>0$. As such, they only slightly differ from a bounded sequence, since $\sup_{n\in\Z} \frac{|X_n|}{1+|n|^\delta} < \infty$ almost surely for $\delta$ arbitrarily small. 
        
        \item Assume that the $X_n$ have a common probability density function given by $f(x) = \frac{c}{(|x|+1)\log^2 (|x|+2)}$ where $c > 0$ is such that $\int_{\R} f(x) \drm x = 1$. Note that $f$ is indeed integrable as a Bertrand integral. However, we have for any $p > 0$ that
        \begin{equation}
            \mathbb{E} [|X_0|^p] = c \int_{\R} \frac{|x|^p}{(|x|+1)\log^2 (|x|+2)} \drm x = \infty
        \end{equation}
        due to the fact the integrand behaves asymptotically like $|x|^{p-1} / \log^2 (|x|)$ and is therefore non-integrable for $p>0$. Hence, $X \notin\S'(\Z)$ almost surely. 
    \end{itemize}
    
\section{Lévy White Noises} \label{sec:lévy}
    
    \subsection{Spaces of Generalized Functions} \label{sec:functionspaces}
     
     We briefly introduce the relevant function spaces that will be used thereafter. More details can be found, for instance, in~\cite{friedlander1998introduction}. We refer the interested reader to~\cite{Treves1967} for a comprehensive exposition on the topological structures of the proposed function spaces. 
     
    Let $\D(\R)$ be the space of compactly supported and infinitely smooth functions. It is equipped with its natural inductive topology as the union of the spaces $\D([-n,n])$ of infinitely smooth functions whose support is included in $[-n,n]$ for $n \geq 1$. In particular, a family of functions $(\varphi_k)_{k\in \Z}$ in $\D(\R)$ converges to $0$ for this topology if there exists $n \geq 1$ such that $\Supp(\varphi_k) \subset [-n,n]$ for every $k \geq 1$ and $\varphi_k$ converges uniformly to $0$ on $[-n,n]$ together with all its derivatives.
    
    The topological dual $\D'(\R)$ of $\D(\R)$ is the space of generalized functions. A locally integrable function $f$ defines a linear and continuous functional over $\D(\R)$ via the relation
    \begin{equation}
        f : \varphi \mapsto \langle f, \varphi \rangle = \int_{\R} \varphi(x) f(x) \drm x.
    \end{equation}
    The framework of generalized functions allows to define functions with no pointwise interpretation, such as the Dirac impulse $\delta$ and all its derivatives. We will  define Lévy white noise are random elements in $\D'(\R)$ in Section~\ref{sec:levynoise}. \\
    
    The Schwartz space of rapidly decaying and infinitely smooth functions is denoted by $\S(\R)$. A function $f$ is in $\S(\R)$ if it vanishes faster than any polynomial together with all its derivatives. 
    Then, $\S(\R)$ is a Fréchet space for the family of semi-norms
    \begin{equation} \label{eq:seminormsSp}
            \lVert \varphi \rVert_{\infty,\alpha,\beta}  = \sup_{x\in \R}  |x|^\alpha |\mathrm{D}^\beta \varphi  (x)| 
    \end{equation}
    where $\alpha \geq 0$ and $\beta \in \N$. 
    In particular, a sequence $(\varphi_k)_{k \geq 1}$ of functions in $\S(\R)$ converges to $0$ if $\lVert \varphi_k \rVert_{\infty,\alpha,\beta} \rightarrow 0$ as $k\rightarrow \infty$ for any $\alpha \geq 0$ and $\beta \in \N$.
    
    The topological dual of $\S(\R)$ is the space $\S'(\R)$ of tempered generalized functions. It is a subspace of $\D'(\R)$ and is informally made of generalized functions that are slowly growing, \emph{i.e.} growing slower than some polynomial. 
    
    \subsection{L\'evy White Noises as Generalized Random Processes} \label{sec:levynoise}

    A generalized random process is a random element of the space $\D'(\R)$ of generalized functions. 
    The theory of generalized random processes was initiated independently by  Gel'fand~\cite{Gelfand1955generalized} and  Itô~\cite{Ito1954distributions}, it offers a broad framework for defining random processes, including some with no pointwise interpretation such as the Lévy white noises. Thereafter, we mostly rely on the expositions in~\cite{GelVil4} and~\cite{Ito1984foundations}. \\
    
    A \textit{generalized random process} $S$ is a collection of random variable $(\langle S , \varphi \rangle)_{\varphi \in \D(\R)}$ such that, 
    \begin{itemize}
        \item \textbf{Linearity:} For any $\varphi, \psi \in \D(\R)$ and $a,b \in \R$, $\langle S , a  \varphi + b \psi \rangle = a \langle S , \varphi \rangle + b \langle S , \psi\rangle$ almost surely.
        \item \textbf{Continuity:} For any sequence $(\varphi_k)_{k\geq 1}$ of test functions $\varphi_k \in \D(\R)$ such that $\varphi_k \rightarrow \varphi$ in $\D(\R)$ when $k\rightarrow \infty$, the random variables $\langle S , \varphi_k \rangle$ converges in probability towards $\langle S, \varphi \rangle$. 
    \end{itemize}
    
    A generalized random process is therefore a linear and continuous functional $S : \D(\R) \rightarrow L_0(\Omega)$ where $L_0(\Omega)$ is the space of real random variables endowed with the convergence in probability~\cite{Ito1984foundations}.
    We associate to $\D'(\R)$ its \textit{cylindrical $\sigma$-field}, \emph{i.e.} the $\sigma$-field generated by the sets 
    \begin{equation}
    \{ f \in \D'(\R), \ \langle f, \varphi \rangle < a \}
    \end{equation}
    where $\varphi \in \D(\R)$ and $a \in \R$. 
    The probability law of $S$ is then the probability measure $\mathscr{P}_S$ on $\D'(\R)$ such that $\mathscr{P}_S( \{  f \in \D'(\R) , \ \langle f , \varphi \rangle < a \} ) = \mathscr{P} ( \langle S, \varphi \rangle < a )$.
    
    \begin{definition}
    The \emph{characteristic functional} $\CF_S : \D(\R) \rightarrow \C$ of a generalized random process $S$ is defined for $\varphi \in \D(\R)$ by
    \begin{equation}
        \CF_S(\varphi) = \mathbb{E} \left[ \mathrm{e}^{\mathrm{i} \langle S ,\varphi \rangle} \right] = \int_{\D'(\R)} \mathrm{e}^{\mathrm{i} \langle f , \varphi\rangle}  \mathscr{P}_S(\mathrm{d} f) .
    \end{equation}
    \end{definition}
    
    The characteristic functional of $S$ is the (infinite dimensional) Fourier transform of its probability law $\mathscr{P}_S$. 
    It generalizes the characteristic function of random variables to the case of generalized random processes.
    In particular, two generalized random processes have the same law if and only if they share the same characteristic functional. 

    \begin{definition} \label{def:LK}
    We say that $\Psi: \R \rightarrow \C$ is a \emph{characteristic exponent} if it can be written as
    \begin{equation} \label{eq:LK}
        \Psi(\xi) = \mathrm{i} \mu \xi - \frac{\sigma^2\xi^2}{2} + \int_{\R \backslash\{0\}} (\mathrm{e}^{\mathrm{i}\xi t} - 1 - \mathrm{i}\xi t \One_{|t|\geq 1} ) \nu(\mathrm{d} t),
    \end{equation}
    with $\mu \in \R$, $\sigma^2 \geq 0$, and $\nu$ a L\'evy--measure, \emph{i.e.}, a Radon measure over $\R \backslash\{0\}$ such that 
    \begin{equation}
        \label{eq:levymeasurecondition}
        \int_{\R \backslash\{0\}} \inf ( 1, t^2 ) \nu(\drm t) < \infty.
    \end{equation}
    \end{definition}

    A random variable $X$ is \textit{infinitely divisible} if it can be decomposed as a sum $X = X_1 + \cdots + X_N$ of $N$ i.i.d. random variables for any $N \geq 1$. Infinitely divisible random variables are closely related to continuous-domain random processes. The Lévy--Khintchine theorem~\cite[Theorem 8.1]{Sato1994levy} ensures that a random variable $X$ is infinitely divisible if and only if its characteristic function can be written as $\CF_X(\xi) = \exp ( \Psi(\xi) )$ with $\Psi$ a characteristic exponent defined in Definition~\ref{def:LK}.
    Following~\cite[Chapter III]{GelVil4}, we now define Lévy white noise from their characteristic functional and exponent.

    \begin{definition}
    A \emph{L\'evy white noise} is a generalized random process $W$ in $\D'(\R)$ whose characteristic functional is given by 
    \begin{equation}
        \CF_W(\varphi) = \exp \left(  \int_{\R} \Psi( \varphi (x) ) \drm x \right),
    \end{equation}
    where $\Psi$ is a characteristic exponent, called by extension the \textit{characteristic exponent} of $W$. 
    If moreover $W$ is almost surely in $\S'(\R)$, then $W$ is a \emph{tempered Lévy white noise}. 
    \end{definition}
    
    The fact that $\CF_W$ specifies the characteristic functional of some generalized random process in $\D(\R)$ is proved in~\cite[Theorem 5, Section III.4.3]{GelVil4}. It relies on the Bochner--Minlos theorem, which characterizes the characteristic functionals as the continuous and positive-definite functionals $\CF : \D(\R) \rightarrow \C$ such that $\CF(0) = 1$~\cite{Fernique1967processus,Ito1984foundations}.

    The characteristic exponent of the centered Gaussian white noise $W$ is given by $\Psi(\xi) = -\sigma^2 \xi^2 / 2$, which corresponds to \eqref{eq:LK} with   $\mu = 0$ and $\nu = 0$.
    The characteristic functional of the Gaussian white noise is therefore $\CF_W(\varphi) = \exp( - \sigma^2 \lVert \varphi \rVert_2^2 / 2 )$.
    The family of Lévy white noises include the S$\alpha$S white noises and the compound Poisson white noises. These examples, among others, will be considered below. 

    We say that a generalized random process $S$ is \textit{stationary} if $S$ has the same law than its shifted version $S(\cdot + x_0)$ for any $x_0 \in \R$.
    We say that $S$ is \textit{stationary at every point} if the random variables $\langle W,  \varphi \rangle$ and $\langle W, \psi \rangle$ are independent as soon as $\varphi, \psi \in \D(\R)$ have disjoint support (or equivalently, $\varphi (x) . \psi(x) = 0$ for any $x\in \R$).
    
    \begin{proposition} \label{prop:statioindep}
    A Lévy white noise $W$ is stationary and independent at every point. In particular, the random variables $\langle W , \varphi \rangle$ and $\langle W , \varphi(\cdot - x_0) \rangle$ are identically distributed for any $x_0 \in \R$ and are independent for $x_0 \in \R$ such that $\varphi (x) \varphi(x -x_0) = 0$ for any $x \in \R$.
    \end{proposition}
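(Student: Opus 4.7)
The plan is to reduce everything to the characteristic functional, using that two generalized random processes with the same characteristic functional have the same law, and that independence of a finite collection of random variables is equivalent to the factorization of their joint characteristic function.

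For stationarity, I would compute $\CF_{W(\cdot + x_0)}(\varphi) = \CF_W(\varphi(\cdot - x_0))$ and apply a change of variable inside the integral defining the characteristic functional: since
\begin{equation}
\int_\R \Psi(\varphi(x - x_0)) \drm x = \int_\R \Psi(\varphi(y)) \drm y
\end{equation}
by translation invariance of the Lebesgue measure, we recover $\CF_W(\varphi)$. This shows $W(\cdot + x_0)$ and $W$ have the same characteristic functional, hence the same law.

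For independence at every point, let $\varphi, \psi \in \D(\R)$ have disjoint supports, so that at every $x\in \R$ at least one of $\varphi(x), \psi(x)$ vanishes. The key pointwise identity is $\Psi(a\varphi(x) + b\psi(x)) = \Psi(a\varphi(x)) + \Psi(b\psi(x))$ for any $a,b \in \R$, which uses that $\Psi(0) = 0$ (immediate from \eqref{eq:LK}). Integrating yields
\begin{equation}
\CF_W(a\varphi + b\psi) = \CF_W(a\varphi)\, \CF_W(b\psi),
\end{equation}
which says precisely that the joint characteristic function of $(\langle W,\varphi\rangle, \langle W,\psi\rangle)$ factorizes, so the two random variables are independent. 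The same calculation with $b=0$ (and replacing $a$ by $\xi$) shows $\CF_{\langle W, \varphi\rangle}(\xi) = \exp(\int_\R \Psi(\xi \varphi(x))\drm x)$, which we will reuse below.

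The ``in particular'' clause follows by specialization. Applying stationarity with shift $x_0$ gives that $\langle W, \varphi\rangle$ and $\langle W(\cdot + x_0), \varphi\rangle = \langle W, \varphi(\cdot - x_0)\rangle$ have the same law. When additionally $\varphi(x)\varphi(x-x_0) = 0$ for every $x\in\R$, the functions $\varphi$ and $\varphi(\cdot - x_0)$ have disjoint supports, so the independence at every point applies directly. The only subtle point is checking the pointwise identity on $\Psi$ when one argument vanishes; this is the main (and minor) obstacle, since it must be verified directly from \eqref{eq:LK}, but the integrand in \eqref{eq:LK} vanishes at $\xi = 0$ and the Gaussian and drift terms vanish at $0$ as well, so $\Psi(0)=0$ and the factorization is immediate.
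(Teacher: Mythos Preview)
Your proof is correct and follows essentially the same approach as the paper: the paper also derives stationarity from the equality $\CF_W = \CF_{W(\cdot - x_0)}$ (via translation invariance of Lebesgue measure), and for independence at every point it simply cites Gel'fand--Vilenkin, whose argument is precisely the characteristic-functional factorization you wrote out using $\Psi(0)=0$. In other words, you have filled in the details the paper leaves to a reference, with no deviation in strategy.
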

    
    The stationarity follows from the fact that the characteristic functionals $\CF_W$ and $\CF_{W(\cdot- x_0)}$ are equal for any $x_0$. The independence at every point is  proved in~\cite[Section III.4]{GelVil4}. 
    Let $X = \langle W , \varphi \rangle$ with $W$ a Lévy white noise and $\varphi \in \D(\R)$.
    We denote by $\Psi$ and $\nu$ the characteristic exponent and Lévy measure of $W$, respectively. 
    Then, the characteristic function of $X$ is given for any $\xi \in \R$ by 
    \begin{equation} \label{eq:CFX}
        \CF_X(\xi) = \mathbb{E} \left[ \mathrm{e}^{\mathrm{i} \xi X} \right]
        = \exp \left( \int_{\R}  \Psi(\xi \varphi(x)) \drm x \right).
    \end{equation}
    This simply follows from the fact that $\CF_X(\xi) = \mathbb{E} \left[ \mathrm{e}^{\mathrm{i} \xi  \langle W,\varphi \rangle} \right] = \mathbb{E} \left[ \mathrm{e}^{\mathrm{i}  \langle W,\xi \varphi \rangle} \right] = \CF_W(\xi\varphi)$. 
    Then, $X$ is infinitely divisible and its Lévy measure $\nu_X$ can be deduced from the one of $W$ according to the relation
     \begin{equation} \label{eq:thisoneisuseful}
        \int_{\R\backslash\{0\}}  f(t) \nu_X(\drm t) = \int_{\Supp (\varphi)} \int_{\R\backslash\{0\}} f(t\varphi(x)) \nu(\drm t) \drm x \in [0,\infty]
    \end{equation}
    for any measurable and positive function $f : \R\backslash\{0\} \rightarrow \R^+$.
    The relation~\eqref{eq:thisoneisuseful} is proved in~\cite[Theorem 9.1]{Unser2014sparse} and can be recovered from the Lévy--Khintchine expansion \eqref{eq:LK} injected in \eqref{eq:CFX}.

    \subsection{Tempered Lévy White Noises} \label{sec:temperedlevy}
    
    The goal of this section is to characterize tempered L\'evy white noises among the complete family in $\D'(\R)$. 
    
    \begin{theorem}
    \label{theo:lévynoisetempered}
    Let $W$ be a L\'evy white noise in $\D'(\R)$. Then, the following conditions are equivalent. 
    \begin{enumerate}
    
        \item The L\'evy white noise $W$ is tempered; \emph{i.e.}, $w \in \S'(\R)$ almost surely.
        
        \item The characteristic functional $\CF_W$ of $W$ is continuous over 
        $\D(\R)$ for the topology induced by $\S(\R)$.
    
        \item Let $\Psi$ be the characteristic exponent of $W$. Then, there exist  $C > 0$ and $\epsilon > 0$    such that, for any $\xi \in \R$, 
        \begin{equation}
            \abs{\Psi(\xi)} \leq C \left( \lvert \xi \rvert^\epsilon + \lvert \xi \rvert^2 \right).
        \end{equation}
        
        \item For any $\varphi \in \D(\R)$, we have that $X_{W,\varphi} = \left( \langle W, \varphi(\cdot - n) \rangle \right)_{n\in \Z} \in \S'(\Z)$ almost surely.
        
        \item The L\'evy measure $\nu$ of $W$ satisfies that $\int_{|t|\geq 1} |t|^\epsilon \nu(dt) < \infty$ for some $\epsilon > 0$.
        
        \item There exists $\epsilon > 0$ such that, for any $\varphi \in \D(\R)$, $\mathbb{E} [|\langle W , \varphi \rangle|^\epsilon] < \infty$.
        
        \item There exists $\epsilon > 0$ and $\varphi \in \D(\R) \backslash\{0\}$ such that $\mathbb{E} [|\langle W , \varphi \rangle|^\epsilon] < \infty$.
        
    \end{enumerate}
    Moreover, if these conditions are not satisfied, then $W \notin \S'(\Z)$ almost surely. 
    \end{theorem}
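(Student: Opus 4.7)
The plan is to establish the equivalences via the circular chain
$(1) \Rightarrow (4) \Rightarrow (7) \Rightarrow (5) \Rightarrow (3) \Rightarrow (2) \Rightarrow (1)$,
together with the easy auxiliary implications $(5) \Rightarrow (6) \Rightarrow (7)$; the almost-sure dichotomy is then read off by combining the contrapositive of $(1)\Rightarrow(4)$ with the $(0-1)$-law of Proposition~\ref{prop:zeroone}. The novelty of the argument lies in routing through condition $(4)$, which transfers the temperedness question for the continuous-domain Lévy white noise to the temperedness of a genuine discrete white noise, to which Theorem~\ref{theo:discrete} applies directly.

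For $(1) \Rightarrow (4)$, I fix $\omega$ with $W(\omega) \in \S'(\R)$: continuity of $W(\omega)$ on $\S(\R)$ yields $\alpha,\beta\geq 0$ and $C(\omega)<\infty$ such that $|\langle W(\omega),\psi\rangle| \leq C(\omega)\,\|\psi\|_{\infty,\alpha,\beta}$ for every $\psi\in\S(\R)$; applied to $\psi = \varphi(\cdot-n)$, this bound is $O(|n|^\alpha)$, so $X_{W,\varphi}(\omega)\in\S'(\Z)$. For $(4) \Rightarrow (7)$, pick a nonzero $\varphi \in \D(\R)$ with $\Supp(\varphi) \subset (0,1)$; the translates $\varphi(\cdot-n)$ then have pairwise disjoint supports, so Proposition~\ref{prop:statioindep} identifies $X_{W,\varphi}$ as a genuine discrete white noise, and Theorem~\ref{theo:discrete} forces $\mathbb{E}[|\langle W,\varphi\rangle|^\epsilon] < \infty$ for some $\epsilon > 0$.

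The classical block, comprising $(7) \Rightarrow (5) \Rightarrow (3)$ together with $(5)\Rightarrow(6)\Rightarrow(7)$, leans on infinite divisibility. For any $\varphi\in\D(\R)$, the random variable $X=\langle W,\varphi\rangle$ is infinitely divisible with Lévy measure $\nu_X$ given by~\eqref{eq:thisoneisuseful}, and one has the standard equivalence $\mathbb{E}[|X|^\epsilon]<\infty \Leftrightarrow \int_{|t|\geq 1}|t|^\epsilon\nu_X(\drm t)<\infty$ (see~\cite[Ch.~25]{Sato1994levy}). For $(7)\Rightarrow(5)$, I isolate a neighborhood on which $|\varphi|$ is bounded below by a positive constant and use~\eqref{eq:thisoneisuseful} to transfer the $\epsilon$-moment condition from $\nu_X$ back to $\nu$; the implications $(5)\Rightarrow(6)$ and $(6)\Rightarrow(7)$ run the same machinery in reverse. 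Step $(5)\Rightarrow(3)$ is a direct Lévy--Khintchine estimate: splitting at $|t|=1$ and using $|\mathrm{e}^{\mathrm{i}\xi t}-1-\mathrm{i}\xi t|\leq(\xi t)^2$ on $|t|<1$ together with $|\mathrm{e}^{\mathrm{i}\xi t}-1|\leq 2\wedge|\xi t|^\epsilon$ on $|t|\geq 1$ yields $|\Psi(\xi)|\leq C(|\xi|^\epsilon + |\xi|^2)$.

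Step $(3) \Rightarrow (2)$ follows from the bound $|\log \CF_W(\varphi_k)| \leq C \int_\R (|\varphi_k(x)|^\epsilon + |\varphi_k(x)|^2)\,\drm x$, controlling $|\varphi_k(x)|$ by $\|\varphi_k\|_{\infty,\alpha,0}(1+|x|)^{-\alpha}$ with $\alpha\epsilon>1$ to make both integrals vanish as $\varphi_k\to 0$ in $\S(\R)$. For $(2)\Rightarrow(1)$, continuity of $\CF_W$ on $\D(\R)$ in the $\S(\R)$-topology extends it by density to a continuous positive-definite functional on $\S(\R)$; Bochner--Minlos then produces a random element $\widetilde W\in\S'(\R)$ whose finite-dimensional distributions over $\D(\R)$ coincide with those of $W$, forcing $W\in\S'(\R)$ almost surely. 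Finally, if $(7)$ fails, then $\mathbb{E}[|\langle W,\varphi\rangle|^\epsilon]=\infty$ for every $\epsilon>0$ and every nonzero $\varphi$ with support in $(0,1)$; Theorem~\ref{theo:discrete} then yields $X_{W,\varphi}\notin\S'(\Z)$ almost surely, and the contrapositive of $(1)\Rightarrow(4)$ concludes that $W\notin\S'(\R)$ almost surely. The main obstacle I anticipate is the measure-theoretic identification in $(2)\Rightarrow(1)$, where one must argue that the $\S'(\R)$-supported process produced by Bochner--Minlos genuinely coincides with $W$ itself rather than with an abstract modification.
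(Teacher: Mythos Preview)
Your proposal is correct and follows essentially the same route as the paper: the cycle through condition~(4) and the appeal to Theorem~\ref{theo:discrete} on discrete white noises is exactly the paper's strategy, and your handling of $(7)\Rightarrow(5)\Rightarrow(3)\Rightarrow(2)$ and of the final dichotomy matches the paper's arguments. The only minor differences are cosmetic---you go $4\Rightarrow 7$ directly and add $5\Rightarrow 6$ separately, whereas the paper does $4\Rightarrow 6\Rightarrow 7$; your $(1)\Rightarrow(4)$ via a direct seminorm bound on $\varphi(\cdot-n)$ is slightly slicker than the paper's sequential argument through Proposition~\ref{prop:caracSpinDp}; and for $(2)\Rightarrow(1)$ the paper simply cites Fernique rather than sketching Bochner--Minlos, so the identification issue you flag is not resolved there either.
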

    
    \textit{Remarks.} 
    Theorem \ref{theo:lévynoisetempered} provides characterizations of the temperedness of a L\'evy white noise $W$ on its characteristic functional (2.), its characteristic exponent (3.), and its L\'evy measure (5.).
    It also recovers the characterization 6. in terms of positive absolute moments that have been demonstrated in~\cite{Dalang2015Levy,Fageot2014}.
    More precisely, the implications $6. \Rightarrow 3. \Rightarrow 1.$ were shown in~\cite{Fageot2014}, revealing that the existence of positive absolute moment is sufficient for $W$ to be tempered. The later contribution~\cite{Dalang2015Levy} then proved that (i) this condition is also necessary and (ii) that $W$ is almost surely not tempered when 6. do not hold. 
    
    The characterization 7. is a slight variation of 6.: it shows that the moment property is  independent from the test function $\varphi \in \D(\R)\backslash\{0\}$. 
    The key novelty is the characterization 4., which allows us to use the characterization of discrete white noises obtained in Theorem~\ref{theo:discrete}. In particular, this provides a new proof for a Lévy white noise to be tempered or not depending on the existence of positive absolute moments, following an approach different from \cite{Dalang2015Levy}. 

    \begin{proof}[Proof of Theorem~\ref{theo:lévynoisetempered}]
    We will prove the theorem thanks to the relations: 
    \begin{equation}
        1. \Rightarrow  4. \Rightarrow  6. \Rightarrow 7. \Rightarrow  5. \Rightarrow  3. \Rightarrow  2. \Rightarrow 1.
    \end{equation}
    The equivalence between 1. and 2. is  true for any generalized random process and is not restricted to L\'evy white noises. It is established in~\cite[Section III.6.1, Corollary 2]{Fernique1967processus} and will not be covered in this paper. 
    On this matter, see also \cite[Section 2.5]{Ito1984foundations}. For alternative proofs and discussions restricted to Lévy white noises, see~\cite[Theorem 4.1]{Lee2006levy} and \cite[Theorem 4]{Fageot2014}. \\
    
    \textbf{1. $\Rightarrow$  4.} 
    Let $\varphi \in \D(\R)$. We fix $(v_k)_{k\geq 1}$ such that $v_k = (v_k[n])_{n\in \Z} \in \D(\Z)$ for any $k \geq 1$ and $v_k \rightarrow 0$ in $\S(\Z)$ when $k \rightarrow \infty$. This means that $\lVert v_k \rVert_{\infty,\alpha} \rightarrow 0$ when $k \rightarrow \infty$ for any $\alpha \geq 0$. 
    We observe that 
    \begin{equation} \label{eq:moonshadow}
        \langle X_{W,\varphi} , v_k \rangle 
        = \sum_{n\in \Z} \langle W, \varphi (\cdot - n) \rangle v_k[n] 
        = \left\langle W , \sum_{n\in\Z} v_k[n] \varphi(\cdot - n )\right\rangle.
    \end{equation}  
    Note that the sums in \eqref{eq:moonshadow} are finite and $\varphi_k = \sum_{n\in\Z} v_k[n] \varphi(\cdot - n ) \in \D(\R)$ for $v_k \in \D(\Z)$ and $\varphi \in \D(\R)$. We will prove that 
    \begin{equation}
        \label{eq:whattoproveonphik}
        \varphi_k \underset{k\rightarrow \infty}{\longrightarrow} 0 \text{ in }\S(\R).
    \end{equation}
    Assume first that \eqref{eq:whattoproveonphik} is established. Since $W \in \S'(\Z^d)$ a.s., we therefore have that, a.s., $\langle X_{W,\varphi}, v_k \rangle = \langle W, \varphi_k \rangle \rightarrow 0$ as $k \rightarrow \infty$. Thanks to Proposition~\ref{prop:caracSpinDp}, this implies that $X_{W,\varphi} \in \S'(\Z)$ almost surely and 4. is proved. \\
     
    Let $\alpha \geq 0$ and $\beta \in \N$. 
    Then, for any $x\in \R$,
    we have   
    \begin{align} \label{eq:bigcomputation}
        |x|^\alpha |\mathrm{D}^\beta \varphi_k (x)|
        &= 
        |x|^\alpha \left|\sum_{n\in \Z} v_k[n] (\mathrm{D}^\beta \varphi_k) (x - n)\right|  
        \leq 
        \sum_{n\in \Z} \left| v_k[n]  |x|^\alpha (\mathrm{D}^\beta \varphi_k) (x - n)\right| \nonumber \\
        &\leq  \lVert v_k \rVert_{\infty,\alpha+2}
        \left( \sum_{n\in \Z} \frac{ (1 + |x|)^\alpha}{ (1 + |n|)^{\alpha+2}}  \left| (\mathrm{D}^\beta \varphi_k) (x - n)\right| \right)  \nonumber \\
        &\leq  
        \lVert v_k \rVert_{\infty,\alpha+2}
        \lVert (1+|\cdot|)^\alpha \mathrm{D}^\beta \varphi \rVert_\infty 
        \left( \sum_{n\in \Z} \frac{ (1 + |x|)^\alpha}{ (1 + |n|)^{\alpha+2} (1+ |x-n|)^\alpha } \right). 
    \end{align}
    We define, for $x,y \in \R$, 
    \begin{equation}
        F(x,y) = \frac{(1 + |y-x| )^\alpha}{(1 + |x| )^\alpha(1 + |y| )^\alpha}.
    \end{equation}
    If $x$ and $y$ have the same sign, then $|x-y| \leq  \max(|x|,|y|)$ and therefore $1 + |x-y| \leq 1 + \max(|x|,|y|) \leq (1+|x|)(1+|y|)$. If now $x$ and $y$ have opposite signs, for instance $y \leq 0 \leq x$, then 
    $1 + |x-y| = 1 + x-y \leq 1+ x - y -xy = (1+x) (1-y) = (1+|x|)(1+|y|)$. This shows that, $0 < F(x,y) \leq 1$ for anx $x,y \in \R$. 
    Finally, this implies that 
    \begin{equation}
    \label{eq:lastdudeforthisstuff}
        \sum_{n\in \Z} \frac{ (1 + |x|)^\alpha}{ (1 + |n|)^{\alpha+2} (1+ |x-n|)^\alpha } 
        =
        \sum_{n\in \Z} \frac{F(n, x-n)}{ (1 + |n|)^{2}} 
        \leq 
        \sum_{n\in \Z} \frac{1}{ (1 + |n|)^{2}} = \frac{\pi^2}{3} < \infty.
    \end{equation}
    Combining \eqref{eq:bigcomputation} and \eqref{eq:lastdudeforthisstuff}, we just proved that
    \begin{equation}
        \lVert \varphi_k \rVert_{\infty,\alpha,\beta}  =  \sup_{x\in \R}  |x|^\alpha |\mathrm{D}^\beta \varphi_k (x)| \leq C \lVert v_k \rVert_{\infty,\alpha+2}
    \end{equation}
    for some constant $C > 0$ independent from $k \geq 1$. 
    Since $\lVert v_k \rVert_{\infty,\alpha + 2}$ vanishes for any $\alpha \geq 0$, we deduce that $\lVert \varphi_k \rVert_{\infty,\alpha,\beta} \rightarrow 0$ as $k \rightarrow \infty$ for any $\alpha \geq 0$ and $\beta \in \N$. Therefore $\varphi_k \rightarrow 0$ in $\S(\R)$ when $k\rightarrow \infty$ and $X_{W,\varphi} \in \S'(\Z)$ almost surely. \\
    
    \textbf{4. $\Rightarrow$  6.}
    We first assume that $\varphi \in \D(\R)$ has its support included in $[0,1]$, so that the test functions $\varphi(\cdot - n)$ have disjoint supports for $n\in\Z$. Hence, the random variables $\langle W, \varphi(\cdot - n)\rangle$ are independent and identically distributed for $n\in \Z$ (see Proposition~\ref{prop:statioindep}). We  apply Theorem~\ref{theo:discrete} and deduce that $\mathbb{E} [ |\langle W , \varphi \rangle|^\epsilon ] < \infty$ for some $\epsilon > 0$.
    
    For a general $\varphi \in \D(\R)$ with no support restriction, the random variables $\langle W, \varphi(\cdot - n)\rangle$ are not independent in general. 
    However, the function $\varphi$ being compactly supported, there exists $K_0 \geq 1$ such that the functions $\varphi(\cdot  + K_0 n)$, $n\in \Z$, have disjoint supports. We can apply the same argument to the sequence $(\langle W,  \varphi(\cdot - K_0 n) \rangle)_{n\in \Z}$ and 6. is proved. \\

    \textbf{6. $\Rightarrow$ 7. $\Rightarrow$ 5.} 
    The implication 6. $\Rightarrow$ 7. is obvious. We assume that $\varphi \neq 0$ and $\epsilon > 0$ are such that $\mathbb{E} [|\langle W , \varphi \rangle|^\epsilon] < \infty$. The key is the connection between the moments of an infinitely divisible random variable $X$ and the moments of its L\'evy measure $\nu_X$. According to~\cite[Theorem 25.3]{Sato1994levy}, we indeed have that, for any $p> 0$, 
    \begin{equation} \label{eq:satomoment}
        \mathbb{E} [|X|^p] < \infty \Longleftrightarrow \int_{|t|\geq 1} |t|^p \nu_X(\drm t) < \infty. 
    \end{equation}
    We apply this relation to $X = \langle W , \varphi \rangle$ and $p=\epsilon$ to deduce that $\int_{|t|\geq 1} |t|^\epsilon \nu_{\langle W , \varphi \rangle} (\drm t) < \infty$. 
    Applying \eqref{eq:thisoneisuseful} with $f(t) = |t|^\epsilon \One_{|t|\geq 1}$ then reveales that
    \begin{equation}
       \int_{|t| \geq 1}  |t|^\epsilon \nu_{\langle W , \varphi \rangle}(\drm t) = \int_{\R\backslash\{0\}} \left( \int_{\{x \in \R, \  | t  \varphi(x) | \geq 1\} } |t \varphi(x)|^\epsilon \drm x \right) \nu(\drm t).  
    \end{equation}
    The function $\varphi$ being non identically zero, 
    there exists an interval $[a,b]$ and $m > 0$ such that $|\varphi(x)| \geq m$ for $a \leq x \leq b$. We then deduce that
    \begin{align} \label{eq:trucutile233}
       \int_{|t| \geq 1}  |t|^\epsilon \nu_{\langle W , \varphi \rangle}(\drm t) 
       &=\int_{\R\backslash\{0\}} \left( \int_{a}^b  \One_{|\varphi(x)| |t| \geq 1} |t|^\epsilon \drm x \right) \nu(\drm t) \geq 
       m^\epsilon \int_{\R\backslash\{0\}} \int_{a}^b  \One_{m |t| \geq 1} |t|^\epsilon \nu(\drm t) \drm x \nonumber \\
       &= m^\epsilon (b-a) \int_{|t| \geq 1/m} |t|^\epsilon \nu(\drm t)
    \end{align}
    where we used that $\One_{|t \varphi(x) | \geq 1} \geq \One_{m |t| \geq 1}$ for any $x \in [a,b]$. 
    The finiteness of 
    $\int_{|t| \geq 1} |t|^\epsilon \nu_{\langle W , \varphi \rangle}(\drm t)$   therefore implies that
    $\int_{|t| \geq 1/m} |t|^\epsilon \nu(\drm t)< \infty$. Finally, since $\int_{|t| \geq 1} |t|^\epsilon \nu(\drm t)$ and $\int_{|t| \geq 1/m} |t|^\epsilon \nu(\drm t)$ differ by a finite constant (given by $c = \int_{\min(1,1/m) \leq |t| \leq \max(1,1/m)} |t|^p \nu(\drm t)$), we deduce 5. \\

    \textbf{5. $\Rightarrow$  3.} 
    This is a reformulation of~\cite[Proposition 2.4]{fageot2017gaussian}. We repeat the proof for the sake of self-completeness. Note that if $\epsilon > 1$, then $\int_{|t|\geq 1} |t| \nu(\drm t ) \leq \int_{|t|\geq 1} |t|^\epsilon \nu(\drm t ) < \infty$. Hence, we can assume that $\epsilon \leq 1$ without loss of generality. 
    We use the Lévy--Khintchine expansion \eqref{eq:LK} of $\Psi$. The function $\xi \mapsto \mathrm{i} \mu \xi - \frac{\sigma^2\xi^2}{2}$ is clearly dominated by $\xi \mapsto |\xi|^\epsilon + |\xi|^2$ (since $\epsilon \leq 1$).
    It then suffices to bound the last term in \eqref{eq:LK}. 
    
    Applying the inequality $|\mathrm{e}^{\mathrm{i} \xi t} - 1 - \mathrm{i} \xi t|\leq \xi^2 t^2$, we deduce that
    \begin{equation}
        \int_{0< |t|< 1} \left| \mathrm{e}^{\mathrm{i} \xi t} - 1 - \mathrm{i} \xi t \right| \nu(\drm t) \leq \xi^2 \int_{0< |t|< 1} t^2 \nu(\drm t). 
    \end{equation}
    The quantity $\int_{0< |t|< 1} t^2 \nu(\drm t)$ is finite for any Lévy measure, hence this term is also dominated as expected. 
    
    We now use the inequality $|\mathrm{e}^{\mathrm{i} \xi t} - 1|^2  = 2( 1 - \cos(\xi t)) \leq 2 \min ( 2, \xi^2 t^2) \leq 4 \min ( 1 , \xi^2 t^2) \leq 4 |\xi t|^{2\epsilon}$ (using that $2 \epsilon \leq 2$). This implies that
    \begin{equation}
        \int_{|t|\geq 1} \left| \mathrm{e}^{\mathrm{i} \xi t} - 1  \right| \nu(\drm t)
        \leq 
        2 |\xi|^\epsilon \int_{|t|\geq 1}  |t|^\epsilon \nu(\drm t). 
    \end{equation}
    Combining the different bounds, we obtain 3.\\

    \textbf{3. $\Rightarrow$  2.} 
    The characteristic functional is positive-definite and normalized such that $\CF_W(0) = 1$. Hence, we deduce that~\cite[Section 13.4.B]{loeve1977elementary} 
    \begin{equation}
     \left\rvert \CF_W(\varphi) - \CF_W(\psi) \right\rvert \leq
     2 \left( 1 - \Re \left\{ \CF_W(\varphi-\psi) \right\} \right)
    \end{equation}
    for any $\varphi, \psi \in \D(\R)$, where $\Re \{z\}$ stands for the real part of $z\in \C$. In particular, $\CF_W$ is continuous if and only if it is continuous at $\varphi = 0$, which we will prove now. 
    The condition 3. implies that 
    \begin{equation}
        \left\lvert   \int_{\R}  \Psi(\varphi(x)) \drm x   \right\rvert \leq 
        \int_{\R} |\Psi(\varphi(x))| \drm x 
        \leq 
        C ( \lVert \varphi \rVert_\epsilon^\epsilon + \lVert \varphi \rVert_2^2).
    \end{equation}
    Assume that $\varphi_k \rightarrow 0$ as $k\rightarrow \infty$ for the topology of $\S(\R)$. 
    This readily implies that $\lVert \varphi_k \rVert_\epsilon^\epsilon + \lVert \varphi_k \rVert_2^2 \rightarrow 0$, 
    and therefore $\left\lvert   \int_{\R}  \Psi(\varphi_k(x)) \drm x   \right\rvert \rightarrow 0$. 
    Finally, 
    \begin{equation}
    \left\rvert \CF_W(\varphi_k) - \CF_W(0) \right\rvert = 
         \left\rvert \CF_W(\varphi_k) - 1 \right\rvert  = 
          \left\rvert \mathrm{e}^{\int_{\R} \Psi(\varphi_k(x)) \drm x} - 1
          \right\rvert \underset{k\rightarrow \infty}{\longrightarrow} |\mathrm{e}^0 -1 | = 0,
    \end{equation}
    proving 2. \\

We assume finally that the equivalent conditions 1. to 7. are not satisfied. Set $\Omega_{W}= \{ \omega \in \Omega , \ W(\omega) \in \S'(\R)\}$. Then, the proof above for the implication $1. \Rightarrow 4.$ implies that the sequence $X_{W(\omega),\varphi} \in \S'(\Z)$ for $\omega \in \Omega_W$. In other terms, we have that $\mathscr{P}(\Omega_W) \leq \mathscr{P} (X_{W,\varphi} \in \S'(\Z)\}$. 
However, Theorem~\ref{theo:discrete} implies that $X_{W,\varphi} \notin \S'(\Z)$ a.s. as soon as it is not a.s. tempered, which is the case here. Hence,
\begin{equation}
    \mathscr{P}(\Omega_W) \leq \mathscr{P} (X_{W,\varphi} \in \S'(\Z)\} = 0
\end{equation}
and $\mathscr{P}(\Omega_W) = 0$; \emph{i.e.}, $W \notin \S'(\R)$ a.s. 
    \end{proof}
    
   As we have seen, the first characterization of tempered L\'evy white noises in terms of moment properties has been obtained in~\cite{Dalang2015Levy}. The proof's strategy of Dalang and Humeau is based on (i) the Lévy--Itô decomposition of a Lévy process, (ii) the reduction of the problem to compound Poisson processes, and (iii) the deduction of the corresponding result for Lévy white noises, which are the weak derivatives of Lévy processes).
   
    Our proof rather relies on the random sequences $X_{W,\varphi} = \left( \langle W, \varphi(\cdot - n)\rangle \right)_{n\in \Z}$ and on the fact that tempered i.i.d. random sequences have been characterized in Theorem~\ref{theo:discrete}. This new proof highlights the strong connection between discrete and Lévy white noises. \\
    
    We illustrate Theorem~\ref{theo:lévynoisetempered} on classical examples of Lévy white noises.
    The terminology for the noise $W$ is given by the corresponding infinitely divisible random variable $X$ with identical characteristic exponent. \\
    
    \emph{S$\alpha$S white noises.} The S$\alpha$S case has been studied in~\cite{Fageot2017besov,Huang2007fractional} in the framework of tempered generalized random processes.
    A  S$\alpha$S white noise $W_\alpha$ corresponds to the characteristic exponent $\Psi(\xi) = - \gamma^\alpha |\xi|^\alpha$ with $0< \alpha \leq 2$ and $\gamma > 0$. The characteristic functional of $W_\alpha$ is then 
    \begin{equation}
        \CF_{W_\alpha}(\varphi) = \exp( - \gamma^\alpha \lVert \varphi \rVert_\alpha^\alpha) 
    \end{equation}
        for any $\varphi \in \D(\R)$. The criterion 3. of Theorem~\ref{theo:lévynoisetempered} directly recovers that $W_\alpha \in \S'(\R)$ almost surely. The Gaussian white noise corresponds to $\alpha = 2$.  \\
        
     \emph{Compound Poisson white noises.} 
     A compound Poisson white noise $W$ is a point process such that
     \begin{equation}
         W = \sum_{n\in \Z} A_n \delta( \cdot - Y_n)
     \end{equation}
     where $A=(A_n)_{n\in \Z}$ is a sequence of i.i.d. random variables with common law $\mathscr{P}_A$ and where the $Y_n$ are such that $N = \mathrm{Card}(n \in \Z, \ a \leq Y_n \leq b )$ is a Poisson random variable of parameter $\lambda (b-a)$ for some $\lambda >0 $ and any $a<b$. We call $\mathscr{P}_A$ the law of the jumps of $W$, which can be any probability law with support in $\R\backslash\{0\}$. 
     The compound Poisson white noises can be used as the approximation in law of any Lévy white noises~\cite{fageot2018gaussian}. This principle has been used to generate Lévy white noises and their extensions such as Lévy processes~\cite{dadi2020generating}. 
    The characteristic exponent of $W$ is given by~\cite[Theorem 1]{Unser2011stochastic}
        \begin{equation}
            \Psi(\xi) = \exp\left( \lambda ( \CF_{A}(\xi) - 1 ) \right)
        \end{equation}
    where $\CF_{A}$ is the characteristic function of the $A_n$. 
    
    Let $X$ be a compound Poisson random variable with characteristic exponent $\Psi$. Then, we have the equivalence 
    \begin{equation}\label{eq:leptittruc}
        \mathbb{E} [|X|^p] < \infty \quad \Longleftrightarrow \quad \mathbb{E} [|Y_0|^p] < \infty
    \end{equation}
    for any $p > 0$. 
    Indeed, the Lévy measure of $X$ is $\lambda \mathscr{P}_A$ and therefore \eqref{eq:leptittruc} readily follows from~\eqref{eq:satomoment}. This implies the following result for compound Poisson white noise.
    
    \begin{proposition} \label{prop:CPwn}
    Let $W$ be a compound Poisson white noise in $\D'(\R)$ with parameter $\lambda > 0$ and law of jump $\mathscr{P}_A$. Then, $W$ is tempered if and only if there exists $\epsilon > 0$ such that
    \begin{equation}
        \int_{\R\backslash\{0\}} |x|^\epsilon \mathscr{P}_A(\drm x) < \infty.    
    \end{equation}
    Otherwise, $W \notin \S'(\R)$ almost surely.
    \end{proposition}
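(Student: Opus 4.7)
The plan is to apply condition 5 of Theorem~\ref{theo:lévynoisetempered} to the specific Lévy measure of a compound Poisson white noise. The first step is therefore to identify this Lévy measure. Starting from the given characteristic exponent and using that $\Pr_A$ is a probability measure supported on $\R\backslash\{0\}$,
\begin{equation*}
\Psi(\xi) = \lambda(\CF_A(\xi) - 1) = \int_{\R\backslash\{0\}} \left(\ee^{\ii \xi t} - 1\right) \lambda \Pr_A(\drm t).
\end{equation*}
Comparing with the Lévy--Khintchine expansion~\eqref{eq:LK} and invoking uniqueness of the Lévy triplet, one reads off $\sigma^2 = 0$ and Lévy measure $\nu = \lambda \Pr_A$.

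Once $\nu$ is identified, condition 5 of Theorem~\ref{theo:lévynoisetempered} yields that $W$ is tempered if and only if there exists $\epsilon > 0$ such that $\int_{|t|\geq 1} |t|^\epsilon \lambda \Pr_A(\drm t) < \infty$, which (since $\lambda > 0$) is equivalent to $\int_{|t|\geq 1} |t|^\epsilon \Pr_A(\drm t) < \infty$. Since $\int_{0 < |t| < 1} |t|^\epsilon \Pr_A(\drm t) \leq \Pr_A(\{0 < |t| < 1\}) \leq 1$ holds automatically for any $\epsilon > 0$, this is in turn equivalent to the stated condition
\begin{equation*}
\int_{\R\backslash\{0\}} |t|^\epsilon \Pr_A(\drm t) < \infty.
\end{equation*}
The ``otherwise'' clause is then precisely the final assertion of Theorem~\ref{theo:lévynoisetempered}, which guarantees $W \notin \S'(\R)$ almost surely whenever its equivalent conditions fail.

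The only slightly delicate point is the identification of $\nu$: the form~\eqref{eq:LK} uses the compensator $t\One_{|t|\geq 1}$, which cannot always be isolated as a finite drift when $\Pr_A$ has no first moment. A clean way to handle this is via uniqueness of the Lévy triplet of an infinitely divisible distribution. Alternatively one can avoid this step altogether by relying on condition 7 of Theorem~\ref{theo:lévynoisetempered}: fix any nonzero $\varphi \in \D(\R)$, compute the Lévy measure of $X = \langle W, \varphi\rangle$ from~\eqref{eq:thisoneisuseful} (which gives $\nu_X$ explicitly in terms of $\Pr_A$), and then use~\eqref{eq:satomoment} to translate the condition $\mathbb{E}[|X|^\epsilon] < \infty$ into the integrability condition on $\Pr_A$. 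Either route reduces the proposition to results already established in the paper, so no new machinery is required.
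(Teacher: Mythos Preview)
Your proof is correct and follows essentially the same approach as the paper: identify the L\'evy measure of the compound Poisson white noise as $\nu = \lambda\mathscr{P}_A$ and then invoke Theorem~\ref{theo:lévynoisetempered}. The paper phrases the reduction slightly differently---it passes through the moment equivalence~\eqref{eq:satomoment} for a compound Poisson random variable before stating the proposition---whereas you appeal directly to condition~5; but the substance is the same, and your version is in fact a bit more explicit (you spell out why the integral over $\{0<|t|<1\}$ is automatically finite and flag the compensator issue in the L\'evy--Khintchine representation).
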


    
    \emph{Some tempered Lévy white noises.} Any infinitely divisible law specifies a Lévy white noise with identical characteristic exponent. 
    Student's t-distributions and Laplace laws are infinitely divisible. 
    We can therefore consider Student's t-white noises~\cite{zhang2015tar}, Laplace white noises~\cite{Koltz2001laplace}, which are all known to admits to positive absolute moments and are therefore tempered. This is also the case for layered stable white noises~\cite{Houdre2007layered}, tempered stable white noises~\cite{grabchak2016tempered,Rosinski2007tempering}, and inverse Gaussian white noises~\cite{Barndorff1997processes}. \\
    
    \emph{Non-tempered Lévy white noises.}
    In order to construct Lévy white noises which are almost surely not tempered, it suffices to consider Lévy measures which do not satisfy the condition 5. of Theorem~\ref{theo:lévynoisetempered}. An example is given by the measure  \begin{equation}
    \nu(\drm t) = \frac{\drm t}{ (1 + |t|) \log^2(2 + |t|)},    
    \end{equation}
    which satisfies $\int_{\R\backslash\{0\}} \min( 1 , t^2) \nu(\drm t) < \infty$ and $\int_{\R\backslash\{0\}} |t|^\epsilon \nu(\drm t) = \infty$ for any $\epsilon > 0$. Alternatively, any compound Poisson white noise whose law of jump $\mathscr{P}_A$ has no positive absolute moment is  almost surely not tempered according to Proposition~\ref{prop:CPwn}. \\
    
    \emph{Connection with the Pruitt index.}
    In~\cite{Pruitt1981growth}, W.E. Pruitt studied the growth properties of random walks and Lévy processes. His result can be reinterpreted by considering the \emph{Pruitt index} of the Lévy white noise $W$ with characteristic exponent $\Psi$, defined as\footnote{The usual definition of the Pruitt index is different, but its equivalence with \eqref{eq:beta0def} is for instance proved in~\cite[Setion 5]{Schilling1998growth}.}
    \begin{equation} \label{eq:beta0def}
        \beta_0 = \sup \left\{ p \geq 0, \ \underset{ |\xi| \rightarrow 0 }{\lim \sup} \frac{|\Psi(\xi)|}{|\xi|^p} < \infty \right\}.  
    \end{equation}
    According to \cite[Proposition 48.10]{Sato1994levy}, denoting by $\mu$ the Lévy measure of $W$, we have that
$\beta_0 = \sup \left\{ 0\leq  p \leq 2, \ \int_{\R\backslash \{0\} } |t|^p \nu(\mathrm{d}t)  < \infty\right\}.$
We therefore see with Theorem~\ref{theo:lévynoisetempered} that
    \begin{equation}
        W \in \S'(\R) \text{ a.s. } \Longleftrightarrow \  \beta_0 > 0 \quad \text{and} \quad W \notin \S'(\R) \text{ a.s. } \Longleftrightarrow \ \beta_0 = 0.
    \end{equation}

    \section{Conclusion}
    
    We provided a new and self-contained proof characterizing the temperedness of Lévy white noises. We recovered the results of previous works, including~\cite{Dalang2015Levy,fageot2017gaussian,Fageot2014}. 
    Our analysis highlights the connection between the Lévy white noises and their discrete counterparts, which are infinite sequences of independent and identically distributed random variables. In particular, we characterized the location of discrete white noises in polynomially weighted $\ell_\infty$-spaces in terms of their absolute moments in Theorem \ref{theo:discrete}. 
    We finally used the discrete characterization in our analysis of the tempered Lévy white noises in Theorem~\ref{theo:lévynoisetempered}.
    In future works, we plan to extend the proposed considerations to new spaces of generalized functions, including tempered ultra-distributions~\cite{triebel1983ultra} and Fourier hyperfunctions~\cite{kaneko1989introduction}. 

\bibliographystyle{plain}
\bibliography{references}

\end{document}